\newtheorem{thm}{Theorem}
\newtheorem{lem}[thm]{Lemma}
\newtheorem{prop}[thm]{Proposition}
 \newtheorem{conj}[thm]{Conjecture}
 \theoremstyle{definition}
 \theoremstyle{remark}
\newtheorem{example}[thm]{Example}
\newcommand{\TryPackage}[3]{\IfFileExists{#1.sty}{\usepackage{#1}#2}{#3}}
\newcommand*\wbar[1]{
  \hbox{ \kern0.01em%
    \vbox{%
      \hrule height 0.5pt  
      \kern0.25ex
      \hbox{%
        \kern-0.2em
        \ensuremath{#1}%
        \kern-0.05em
      }%
    }%
  \kern-0.08em}%
}
\newcommand{\wt}{\widetilde}
\newcommand{\ep}{\varepsilon}
\renewcommand{\phi}{\varphi}
\newcommand{\Om}{\Omega}
\newcommand{\RR}{\mathbb{R}}
\newcommand{\ZZ}{\mathbb{Z}}
\newcommand{\cE}{\mathcal{E}}
\newcommand{\vb}{\operatorname{vb}}
\newcommand{\vbr}{\operatorname{vbr}}
\newcommand{\wb}{\operatorname{wb}}
\newcommand{\br}{\operatorname{br}}
\newcommand{\mr}{\operatorname{mr}}
\newcommand{\RG}{\operatorname{\it \wbar G}}
\newcommand{\QG}{\operatorname{\it QG}}
\newcommand{\sm}{\smallsetminus}
\begin{document}

\title[Bridge numbers for virtual and welded knots]
{Bridge numbers  for virtual and welded knots}
\author[Boden]{Hans U. Boden}
\address{Mathematics \& Statistics, McMaster University, Hamilton, Ontario}
\email{boden@mcmaster.ca}

\author[Gaudreau]{Anne Isabel Gaudreau}
\address{Mathematics \& Statistics, McMaster University, Hamilton, Ontario}
\email{gaudreai@mcmaster.ca}
\subjclass[2010]{Primary: 57M25, Secondary: 57M27}
\keywords{Virtual knots, welded knots, bridge number, knot group, virtual knot group, parity.}


\begin{abstract} Using Gauss diagrams, one can define the virtual bridge number $\vb(K)$ and the welded bridge number $\wb(K)$, invariants of virtual and welded knots satisfying $\wb(K) \leq \vb(K)$. If $K$ is a classical knot, Chernov and Manturov showed that $\vb(K)=\br(K)$, the bridge number as a classical knot, and we ask whether the same thing is true for welded knots. The welded bridge number is bounded below by the meridional rank of the knot group $G_K$, and we use this to relate this question to a conjecture of Cappell and Shaneson. We show how to use other virtual and welded invariants to further investigate bridge numbers. Among them are Manturov's parity and the reduced virtual knot group $\RG_K$, and we apply these methods to address Questions 6.1, 6.2, 6.3 \& 6.5 raised by Hirasawa, Kamada and Kamada in their paper \emph{Bridge presentation of virtual knots}, J. Knot Theory Ramifications {\bf 20} (2011), no. 6, 881Ð-893.
 \end{abstract}

\maketitle
\section{Virtual and welded knots}

The classical theory of knots involves studying embeddings of $S^1\hookrightarrow S^3$ up to ambient isotopy. Under projection to a generic two-plane, any given knot is mapped to an immersion $S^1 \looparrowright \RR^2$ with finitely many immersion points, each of which is a transverse double point. The 3-dimensional knot can be reconstructed from its projection by keeping track of crossing information at each double point. The knot projection, together with the crossing information, is called the knot diagram. Two knot diagrams correspond to isotopic knots if and only if they are related by a sequence of Reidemeister moves. 

To every oriented knot diagram corresponds a Gauss word, which records the order in which the crossings are encountered starting from a fixed basepoint on the knot, along with the sign of each crossing. The Gauss word is conveniently represented by a decorated trivalent graph called the Gauss diagram, consisting of a circle oriented counterclockwise together with $n$ signed, directed arcs connecting $n$ distinct pairs of points on the circle. The directed arcs, or chords, point from an overcrossing (arrowtail) to an undercrossing (arrowhead). The sign of the chord indicates whether the crossing is right-handed (positive) or left-handed (negative). The Reidemeister moves on knot diagrams can be translated into corresponding moves on the Gauss diagrams. In \cite{Polyak}, Polyak shows that the complete set of Reidemeister moves on oriented diagrams can be generated from the four moves $\Om 1a, \Om 1b, \Om 2a,$ and $\Om 3a$ illustrated in Figure \ref{RM}. 

Although every classical knot diagram is determined by its Gauss diagram, not all Gauss diagrams come from classical knots. We call a Gauss diagram \emph{planar} if it represents a classical knot diagram. By inserting virtual crossings, which are displayed as double points with circles around them, one can draw a virtual knot diagram corresponding to any given Gauss diagram, whether or not it is planar.

Virtual knot theory involves studying virtual knot diagrams up to virtual isotopy. Note that Reidemeister moves of Figure \ref{RM} �do not assume the underlying diagrams are planar, and we say that two Gauss diagrams $D_1$ and $D_2$ are \emph{virtually isotopic} if there is a sequence of  Reidemeister moves that transform $D_1$ into $D_2$. Virtually isotopy defines an equivalence relation on Gauss diagrams, and virtual knots are defined to be the equivalence classes of Gauss diagrams under virtual isotopy. Therefore, if two planar diagrams are classically isotopic, then they must be virtually isotopic. The converse to this statement, while true, is not at all obvious. The easiest way to explain that is to introduce the knot group $G_K$ associated to a virtual knot $K$.

\begin{figure}[ht]
\centering
\includegraphics[scale=0.70]{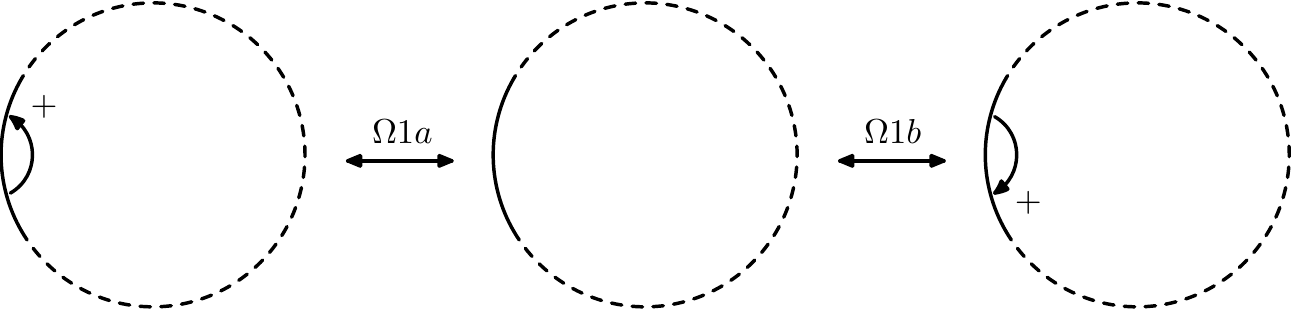} \smallskip \\
\includegraphics[scale=0.70]{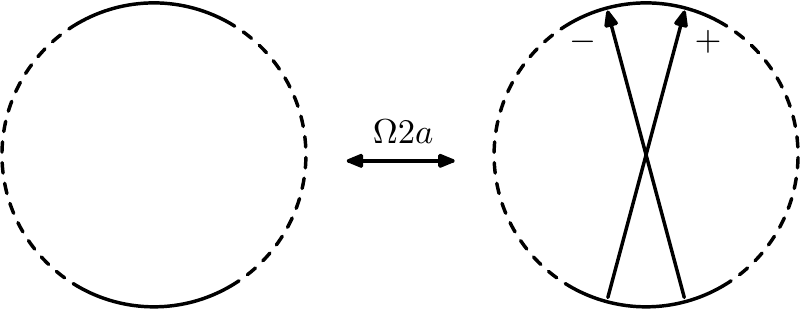}  \bigskip \\
 \includegraphics[scale=0.70]{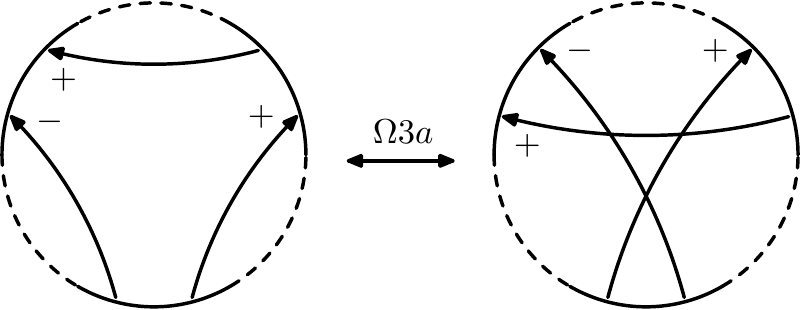}  \qquad  
  \includegraphics[scale=0.70]{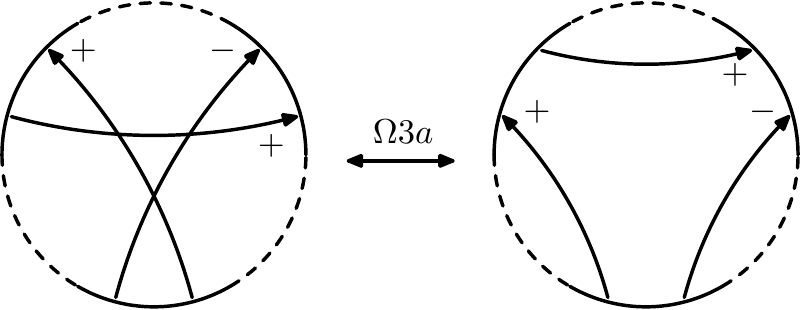}
\caption{Polyak's minimal generating set of Reidemeister moves.}
\label{RM}
\end{figure}

Given a Gauss diagram $D$, the knot group $G_D$ is the finitely presented group given by the following construction. Suppose $D$ has $n$ chords, and consider the generators $a_1, \ldots, a_n$ given by the $n$ arcs of $D$ from one arrowhead to the next. We order the chords $c_i$ and arcs $a_i$ consistently so that the arrowhead of $c_i$ separates $a_i$ from $a_{i+1}$, modulo $n$. Suppose an arrowtail of $c_i$ lies on the arc labeled by $a_j$ as in Figure \ref{GD-crossing}. In this case, we impose the relation $a_{i+1} = a_j^{\ep_i} a_i a_{j}^{-\ep_i}$, where $\ep_i$ is the sign of $c_i$.

\begin{figure}[ht]
\centering
\includegraphics[scale=0.90]{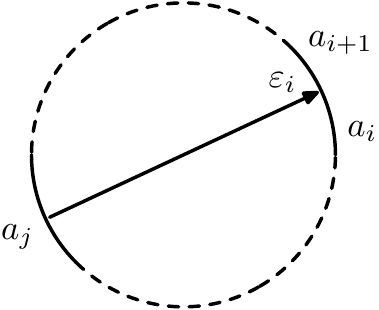}  
\caption{The chord $c_i$ gives the Wirtinger relation $a_{i+1} = a_j^{\ep_i} a_i a_{j}^{-\ep_i}$.}
\label{GD-crossing}
\end{figure}

The group $G_D$ is invariant under virtual isotopy, and the resulting invariant is called the knot group of $K$ and is denoted $G_K$. Notice that if $K$ is classical, then $G_K$ is just the fundamental group of the complement $S^3 \sm \tau K$ of a tubular neighbourhood of $K$. For any virtual knot $K$, one can define commuting elements $\mu, \lambda \in G_K$ called the meridian and longitude, and the peripheral structure determined by the pair $\mu, \lambda$ is again invariant under virtual isotopy. Now for classical knots, Waldhausen showed that the knot group $G_K$ and its peripheral structure form a complete invariant of $K$, and since both are invariant under virtual isotopy, Waldhausen's theorem implies that two classical knot diagrams are virtually isotopic if and only if they are classically isotopic.  This argument shows that classical knot theory embeds into virtual knot theory. 

In welded knot theory, we say two Gauss diagrams are \emph{welded equivalent} if they are related by a sequence of  Reidemeister moves and the forbidden overpass, which is a move that allows one to switch adjacent arrowtails, see Figure \ref{GD-forbidden}. A welded equivalence class of Gauss diagrams is called a welded knot, and it can also be regarded as an equivalence class of virtual knots. The knot group $G_K$ and peripheral structure are invariant under welded equivalence, and thus Waldhausen's theorem shows that classical knots embed into the coarser theory of welded knots. 

\begin{figure}[ht]
\centering
\includegraphics[scale=0.90]{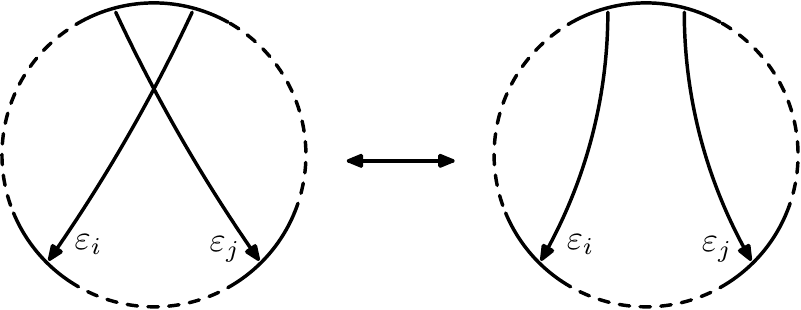}  
\caption{The forbidden overpass.}
\label{GD-forbidden}
\end{figure}

\section{Virtual and welded bridge numbers}

In this section, we introduce invariants of virtual and welded knots based on the bridge number of the underlying Gauss diagrams.
We begin with the definition of the virtual bridge number $\vb(K)$.
Given a Gauss diagram $D$, an overbridge of $D$ is defined to be a maximal arc of the circle containing only arrowtails (overcrossings) on it,  
and the bridge number of $D$ is the number of overbridges of $D$, denoted $\vbr(D)$. Given a virtual knot $K$, we define its virtual bridge number $\vb(K)$ to be the minimum bridge number taken over all Gauss diagrams $D$ virtually isotopic to $K$. By convention, the unknot is taken to have 
bridge number $1$, cf. \cite{HKK}.

For a classical knot $K$, its classical bridge number $\br(K)$ is defined similarly as the minimum bridge number taken over all planar Gauss diagrams representing $K$. Since $\br(K)$ is a minimum over a smaller set of Gauss diagrams, we see that $\vb(K) \leq \br(K)$. It is natural to ask (\cite[Question 6.4]{HKK}, \cite{BC}) if there are examples where this inequality can be strict. In \cite[Theorem 3.2]{Chernov} and \cite[Corollary 2]{Manturov}, Chernov and Manturov answer this question using a projection from virtual knots to classical knots defined using parity. This is summarized in the following theorem,
and we will discuss parity and its application to virtual bridge numbers in Section \ref{PandP}.  

\begin{thm}[Chernov, Manturov] \label{manturov}
If $K$ is a classical knot, then $\vb(K) = \br(K).$
\end{thm}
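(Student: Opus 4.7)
The inequality $\vb(K) \leq \br(K)$ is immediate: $\br(K)$ is the infimum of $\vbr(D)$ over planar Gauss diagrams $D$ representing $K$, while $\vb(K)$ is the infimum over all virtual Gauss diagrams representing $K$. For the reverse inequality, my plan is to construct a projection $\pi$ on Gauss diagrams with three properties: (a) $\pi$ descends to a well-defined map on virtual equivalence classes of Gauss diagrams; (b) $\vbr(\pi(D)) \leq \vbr(D)$; and (c) the image of $\pi$ consists of planar Gauss diagrams, and $\pi$ restricts to the identity on classical isotopy classes. Assuming such a $\pi$, if $K$ is classical and $D$ is any virtual representative with $\vbr(D) = \vb(K)$, then $\pi(D)$ is a planar Gauss diagram representing $K$ with at most $\vb(K)$ overbridges, whence $\br(K) \leq \vb(K)$.

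The projection $\pi$ I would use is built from Manturov's Gaussian parity. Assign each chord $c$ of a Gauss diagram $D$ the parity that counts, mod $2$, the number of other chords with which $c$ is linked. A combinatorial consequence of planarity is that every chord in a planar Gauss diagram has parity $0$. Define $\pi$ by iteratively deleting all odd chords (iteration is needed because removing one chord can change the parities of others), stopping when every remaining chord is even. Property (b) is essentially immediate: removing an arrowtail can only merge adjacent overbridges, never split them. Property (c) reduces to the parity observation together with (a), since any planar representative is already fixed pointwise by the chord-deletion step.

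The main obstacle is property (a): showing that $\pi$ is well-defined on virtual equivalence classes. The difficulty is that a single Reidemeister move can create or destroy chords and simultaneously flip the parities of others in a non-local way, so one cannot merely commute the move past the parity deletion. The verification requires a careful case analysis of how the set of odd chords transforms under each of the generating moves $\Om 1a$, $\Om 1b$, $\Om 2a$, $\Om 3a$ of Figure \ref{RM}, exploiting the fact that linking numbers with other chords behave predictably under these local modifications. Establishing this invariance is the technical heart of the Chernov-Manturov argument; once it is in hand, claims (b) and (c) and the conclusion follow directly.
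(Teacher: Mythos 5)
Your steps (a) and (b) are sound and coincide with the machinery the paper sets up: (a) is exactly Lemma \ref{lemma-Manturov}, and (b) is the observation used in the proof of Theorem \ref{proj} that deleting (or virtualizing) crossings cannot increase the number of overbridges. The genuine gap is claim (c). Evenness of every chord under Gaussian parity is \emph{necessary} for planarity but far from sufficient: there are non-planar Gauss diagrams all of whose chords are even (for instance a chord diagram whose interlacement graph is a five-cycle satisfies the evenness condition but fails the full Gauss--Lov\'asz realizability criterion), and there are non-classical virtual knots all of whose crossings are even (e.g.\ almost classical but non-classical knots), so your iterated odd-chord deletion stabilizes at an all-even diagram that need not be planar and, in general, at a virtual knot that need not be classical. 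For a classical knot $K$ your argument only yields the following: the classical representative has all chords even, so $\pi$ fixes $K$ \emph{as a virtual knot}, and hence $\pi(D)$ is some virtual diagram of $K$ with at most $\vb(K)$ overbridges. That gives nothing beyond $\vb(K)\leq \vb(K)$; to conclude $\br(K)\leq \vbr(\pi(D))$ you need $\pi(D)$ itself to be a planar diagram, and Gaussian parity alone does not deliver this.

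Closing that gap is precisely the content of Manturov's projection theorem \cite{Manturov}: the projection onto classical knots that restricts to the identity on classical knots is not obtained by (iterated) Gaussian parity alone, but by a hierarchy of finer parities (passing through knots in thickened surfaces and homological parity) which allows one to keep projecting after all crossings have become even, until one genuinely lands in the classical category. Chernov \cite{Chernov} then deduces the bridge-number statement by combining that projection with the monotonicity you state in (b), i.e.\ the paper's Theorem \ref{proj}. So your outline follows the same strategy that the paper cites, but the step you describe as following directly from the parity observation together with (a) is in fact the technical heart of the Chernov--Manturov argument, and as written it fails.
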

 
In an analogous way, one can define the welded bridge number $\wb(K)$ as the minimum bridge number over all Gauss diagrams welded equivalent to $K$. Since a welded knot can be viewed as an equivalence class of virtual knots, for any virtual knot $K$, we have
$$\wb(K) ~\leq ~\vb(K).$$
The above inequality can be strict, indeed in Section \ref{section-3} we will see an example of a virtual knot $K$ with $\vb(K)>1$ which is welded equivalent to the unknot. 

Notice that if $K$ is an $m$-bridge knot (virtual or classical), then the knot group $G_K$ can be presented with $m$ meridional generators. To see this, label the overbridges $a_1, \ldots, a_m$ and travel along the knot to the first undercrossing. The over crossing arc is part of a bridge and so is labelled by $a_j$ for some $j$, and the incoming arc is labelled by $a_i$ for some $i$ as well. Thus, the Wirtinger relation implies the outgoing arc is given by $a_j^\ep a_i a_j^{-\ep}$. Continuing in this manner, one can label all the remaining arcs of the diagram of $K$ as words in $a_1, \ldots, a_m$, and it follows
that $G_K$ admits a presentation with $m$ generators corresponding to the  meridians $a_1, \ldots, a_m.$

We define the meridional rank $\mr (G_K)$ of the knot group to be the minimum number of meridional generators taken over all presentations of $G_K$. Since $G_K$ is invariant under welded equivalence, the above construction shows that $\mr (G_K) \leq \wb(K)$ in general. The following conjecture, which is due Cappell and Shaneson (cf. Problem 1.11 of \cite{Kirby}), asks whether every classical knot with meridional rank $m$ is an $m$-bridge knot. 

\begin{conj}[Cappell-Shaneson] \label{CSConj} 
If $K$ is classical, then $\br(K)=\mr(G_K)$. 
\end{conj}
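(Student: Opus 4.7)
The inequality $\mr(G_K) \le \br(K)$ has just been observed, so the content of the conjecture is the reverse inequality $\br(K) \le \mr(G_K)$. My plan is to start from a meridional presentation $G_K = \langle x_1, \ldots, x_m \mid r_1, \ldots\rangle$ in which each $x_i$ is conjugate to the meridian $\mu$, and attempt to realize it geometrically. The loops $x_i$ assemble into a map $\bigvee_{i=1}^m S^1 \to S^3 \sm \tau K$ inducing a $\pi_1$-surjection; I would thicken this to a map $\phi\colon V_m \to S^3$ from a genus-$m$ handlebody so that $\phi(\partial V_m)$ meets $K$ transversely in $2m$ points, and then try to isotope $\phi$ into an embedding whose complement is also a handlebody, making $\phi(\partial V_m)$ into an $m$-bridge sphere for $K$.

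I would run an induction on $m$. The base case $m = 1$ is clean: if $\mr(G_K) = 1$, then $G_K$ is cyclic, hence $G_K \cong \ZZ$ since its abelianization is already $\ZZ$, and then Papakyriakopoulos's Dehn lemma forces $K$ to be the unknot, so $\br(K) = 1$. For the inductive step I would use that classical knot exteriors are Haken: find an essential decomposing surface in $S^3 \sm \tau K$ splitting $K$ into simpler pieces, then show that the meridional ranks of the pieces sum to at most $m$ and apply the induction. The JSJ decomposition gives access to satellite and cable structures, where partial results such as those of Rost--Zieschang for torus knots and Boileau--Zieschang for 2-bridge knots can be invoked to cover particular JSJ pieces.

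The hardest step, and the reason the conjecture has stood open for decades, is the algebraic-to-geometric passage from a meridional presentation to an embedded bridge surface of the same genus. The meridional conjugates $x_i$ can sit in $S^3 \sm \tau K$ in a geometrically complicated way, and without additional structural control one cannot force them onto separate bridges. Additivity of meridional rank under satellite or JSJ decomposition is itself subtle, so even the inductive strategy does not close off without new input. A complete proof would presumably require a genuine comparison between group-theoretic rank and geometric complexity of the knot exterior, perhaps via a Heegaard-genus or width-type invariant that matches $\mr(G_K)$ on the nose for classical knots; absent such a tool, my proposal is really a program that verifies the conjecture at the base and on recognizable JSJ pieces, leaving the general hyperbolic case as the main gap.
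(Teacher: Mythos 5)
You have not proved the statement, and you should not expect to: this is the Cappell--Shaneson conjecture, recorded in the paper as an open problem (Problem 1.11 of \cite{Kirby}), and the paper itself offers no proof --- it only \emph{assumes} the conjecture in order to deduce Theorem \ref{thm1}. So the right benchmark here is not a comparison with a proof in the paper but an assessment of whether your argument closes the problem, and it does not. Your base case is fine: meridional rank one forces $G_K$ cyclic, hence $G_K \cong \ZZ$ since the abelianization is $\ZZ$, and the unknotting theorem (Dehn's lemma/loop theorem) gives $\br(K)=1$. Your citations of partial results (torus knots, $2$-bridge knots, and more generally pieces accessible through the JSJ decomposition) are also accurate as far as they go.

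The genuine gap is exactly the one you name in your last paragraph, and it is fatal to the proposal as a proof: there is no known way to pass from an abstract presentation of $G_K$ by $m$ meridional conjugates to an embedded bridge sphere of the same complexity. The wedge of circles realizing the generators can be thickened to a map of a handlebody, but isotoping that map to an embedding whose complementary side is again a handlebody is precisely where algebraic rank and geometric bridge number can a priori diverge; nothing in your sketch controls this. Likewise, the additivity (or even subadditivity in the needed direction) of meridional rank under satellite/JSJ splittings is itself unproved in general, so the inductive step does not close even granting the base case and the known special families. What you have written is an accurate survey of a plausible program, with the hyperbolic case and the algebraic-to-geometric realization step left open --- which is to say, the conjecture remains a conjecture, consistent with how the paper treats it.
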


Assuming this conjecture, we obtain the following generalization of Theorem \ref{manturov}.

\begin{thm} \label{thm1}
If Conjecture \ref{CSConj} is true, then $\wb(K) = \br(K)$ for all classical knots. 
\end{thm}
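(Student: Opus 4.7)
The plan is to prove the equality $\wb(K)=\br(K)$ by establishing the two opposing inequalities $\wb(K)\le \br(K)$ and $\br(K)\le \wb(K)$, each of which follows by combining facts already stated in the excerpt with the hypothesis.

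First I would handle the upper bound $\wb(K)\le \br(K)$. This requires no new input beyond what is already assembled in Section 2: since every welded-equivalence class is a union of virtual-equivalence classes, the minimum defining $\wb(K)$ is taken over a larger set of Gauss diagrams than that defining $\vb(K)$, giving $\wb(K)\le \vb(K)$. For a classical knot $K$, Theorem \ref{manturov} of Chernov and Manturov identifies $\vb(K)=\br(K)$. Chaining these yields $\wb(K)\le \vb(K)=\br(K)$.

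Next I would establish the lower bound $\br(K)\le \wb(K)$, and this is where the Cappell--Shaneson hypothesis enters. The paragraph preceding Conjecture \ref{CSConj} already gives, for every virtual (and hence every welded) knot, the inequality $\mr(G_K)\le \wb(K)$, by reading off a Wirtinger-type presentation in which each overbridge of a bridge-minimizing Gauss diagram contributes one meridional generator. Under the assumption of Conjecture \ref{CSConj}, one has $\br(K)=\mr(G_K)$ for every classical $K$, and combining these two facts gives $\br(K)=\mr(G_K)\le \wb(K)$.

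Finally, putting the two inequalities together yields $\br(K)\le \wb(K)\le \br(K)$, hence the desired equality $\wb(K)=\br(K)$. There is no genuine obstacle here: the theorem is essentially a packaging result, and all of its ingredients are available. The only point worth stressing is that the $\wb\le \vb$ inequality and the meridional-rank lower bound both hold unconditionally for all virtual knots, so the classical hypothesis is used only through Theorem \ref{manturov} on one side and through Conjecture \ref{CSConj} on the other.
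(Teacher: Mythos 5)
Your proof is correct, and its essential engine is the same as the paper's: the conjecture plus the unconditional bound $\mr(G_K)\le \wb(K)$ gives $\br(K)=\mr(G_K)\le \wb(K)$. The one place you diverge is the upper bound. You obtain $\wb(K)\le \br(K)$ by chaining $\wb(K)\le \vb(K)$ with the Chernov--Manturov theorem $\vb(K)=\br(K)$, whereas the paper gets $\wb(K)\le \br(K)$ directly "by definition": every planar Gauss diagram representing $K$ classically is in particular a Gauss diagram welded equivalent to $K$, so the welded minimum is taken over a superset and no projection/parity theorem is needed. Your route is logically valid, since Theorem \ref{manturov} is an unconditional theorem, but it imports a genuinely nontrivial result where a one-line inclusion of diagram classes suffices; moreover, it obscures the paper's follow-up remark that Theorem \ref{thm1} would \emph{imply} Theorem \ref{manturov} via $\wb(K)\le \vb(K)\le \br(K)$ --- an implication that reads as circular if Theorem \ref{manturov} is already built into the proof. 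So the paper's version buys independence from the parity machinery, while yours loses nothing in correctness but gains nothing in return.
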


\begin{proof}
For any virtual knot $K$, we know that $G_K$ is invariant under welded equivalence and that $\mr(G_K) \leq  \wb(K)$. Now suppose $K$ is classical. Then Conjecture \ref{CSConj} implies that
$\br(K)=\mr(G_K)$. The conclusion now follows since $\wb(K) \leq \br(K)$ by definition.
\end{proof}
 
Notice that Theorem \ref{thm1} implies Theorem \ref{manturov}, since for $K$ classical one always has $$\wb(K) ~\leq ~\vb(K)~ \leq~ \br(K).$$

We will use the elementary ideal theory of the knot group $G_K$ to deduce information about its meridional rank $\mr(G_K).$ 
 
Consider the knot group $G_K$ of a virtual knot $K$. Let $G_K'$ and $G_K''$ be its first and second commutator subgroups. The abelianization $G_K / G_K'$ is isomorphic to the infinite cyclic group $\ZZ$, and the quotient $G_K'/ G_K''$ is a module over the ring $\ZZ[t^{\pm 1}]$
of  Laurent polynomials called the Alexander module. A presentation matrix for the Alexander module is obtain by Fox differentiation as follows.
Given the group presentation
\begin{equation}\label{Gppres}
G_K = \langle a_1, \ldots , a_n \mid  r_1, \ldots, r_n  \rangle,
\end{equation} 
the associated Alexander matrix  is the $n \times n$ matrix 
\begin{equation}\label{Alex}
A=\left(\left.\frac{\partial r_i}{\partial a_j}\right|_{a_1, \ldots, a_n=t} \right)
\end{equation} 
whose $(i,j)$ entry is the Laurent polynomial
obtained by taking the Fox derivative  $\frac{\partial r_i}{\partial a_j}$ and substituting $t$ for each $a_i$. (Here, we are assuming that each $a_i$ is a meridional generator for $G_K$.)   The matrix $A$ is not uniquely determined by the module, but the associated sequence 
\begin{equation}\label{chain}
 (0)= \cE_0 \subset \cE_1 \subset \cdots \subset \cE_n  = \ZZ[t^{\pm 1}]
 \end{equation}
of elementary ideals  
depends only on the underlying Alexander module. Here, $\cE_k$ is the ideal generated by all $(n-k) \times (n-k)$ minors of $A$ and is called $k$-th elementary ideal of $G_K$. 
(That $\cE_0 =(0)$ follows from the fundamental identity and the fact that the presentation matrix $A$ is square, see \cite[(2.3)]{Fox1953}.) Notice that the length of the chain \eqref{chain} of elementary ideals is bounded by the size of the Alexander matrix \eqref{Alex}. For instance, if $G_K$ has meridional rank $k$, then using a presentation \eqref{Gppres} of $G_K$ with $k$ generators, then
the associated Alexander matrix $A$ is a $k \times k$ matrix, and it follows that $\cE_k = (1)$. The following proposition summarizes the discussion.
\begin{prop}\label{group-mr}
 If $K$ is a virtual knot whose $k$-th elementary ideal $\cE_k \neq (1)$ is proper, then the knot group $G_K$ has meridional rank $\mr(G_K) \geq k+1$ and $K$ has welded bridge number $\wb(K) \geq k+1$.
\end{prop}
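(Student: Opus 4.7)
The plan is to argue by contraposition: I will show that if $\mr(G_K) \leq k$, then $\cE_k = (1)$, which contradicts the hypothesis and yields $\mr(G_K) \geq k+1$. The welded bridge inequality is then immediate, because the preceding discussion already established the general bound $\mr(G_K) \leq \wb(K)$ for any virtual knot, so chaining the two inequalities gives $\wb(K) \geq \mr(G_K) \geq k+1$.

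The content of the contrapositive is essentially packaged in the remark immediately before the proposition: if $G_K$ admits a meridional presentation with $m$ generators, then the associated Alexander matrix has only $m$ columns, and hence its top elementary ideal is $(1)$. To make this precise for an arbitrary $k \geq m$, I would appeal to the fact that the chain \eqref{chain} of elementary ideals depends only on the isomorphism class of the Alexander module, not on the specific presentation used to compute it. Concretely, suppose $\mr(G_K) = m \leq k$ and choose a presentation $G_K = \langle a_1, \ldots, a_m \mid r_1, \ldots, r_\ell\rangle$ with each $a_j$ a meridian, so that $a_j \mapsto t$ under abelianization. Fox differentiating the relations as in \eqref{Alex} produces a presentation matrix for the Alexander module with $m$ columns, which after inserting redundant relations can be taken square of size $m \times m$. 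Its elementary ideals then form a chain $(0) = \cE_0 \subset \cdots \subset \cE_m = (1)$, and by module invariance this chain coincides with \eqref{chain} in the overlapping range of indices. In particular $\cE_m = (1)$, and since the chain is increasing, $\cE_k \supseteq \cE_m = (1)$, giving $\cE_k = (1)$ as required.

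The one step that I expect to require genuine care is the presentation-independence of the elementary ideals, that is, the invariance of the Fitting ideals of the Alexander module under Tietze transformations on the group presentation. This is a classical fact but must be invoked explicitly, and it is the point at which the paper's original setup (which fixed a square $n \times n$ Wirtinger presentation) needs to be widened to accommodate presentations with fewer meridional generators. Once this is granted, everything else reduces to a direct assembly of the ingredients already prepared: the substitution $a_j \mapsto t$ in the Fox derivatives for meridional generators, the inclusion $\mr(G_K) \leq \wb(K)$ coming from labelling arcs by overbridges, and the nested structure of the chain \eqref{chain}.
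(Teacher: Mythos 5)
Your argument is correct and is essentially the paper's own: the proposition is stated there as a summary of the preceding discussion, which makes exactly your contrapositive point that a presentation with $m\leq k$ meridional generators yields an Alexander matrix with $m$ columns and hence $\cE_k\supseteq\cE_m=(1)$, combined with the previously established bound $\mr(G_K)\leq\wb(K)$. Your explicit appeal to the presentation-independence of the elementary (Fitting) ideals is the right way to make that discussion rigorous, and it matches the paper's remark that the chain \eqref{chain} depends only on the underlying module.
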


\begin{example}
The virtual knot  $K = 6.87262$ has knot group
$$G_K =  \langle a, b, c \mid a^{-1}c^{-1}a^{-1}cac, \, b^{-1}a^{-1}b^{-1}aba, \, c^{-1}b^{-1}c^{-1}bcb \rangle.$$
This knot is almost classical, meaning that it admits an Alexander numbering, and it follows from \cite{NNST} that the first elementary ideal $\cE_1$ is principal. Using the presentation above, one can show directly that $\cE_1$ is generated by $(t^2-t+1)^2$ and further that $\cE_2$ is also principal and generated by $t^2-t+1.$ Since $G_K$ admits a presentation with 3 meridional generators and $\cE_2 \neq (1)$, Proposition \ref{group-mr} implies that $
\mr(G_K) =3.$ Since $K$ can be represented by a diagram with three bridges, it follows that $\vb(K)=\wb(K) = 3.$
\end{example}

\begin{figure}[ht]
\centering
\includegraphics[scale=0.90]{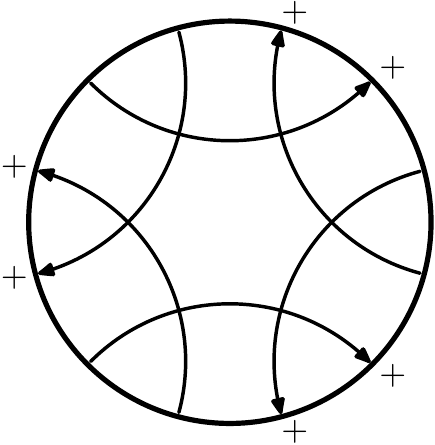}
\caption{The Gauss diagram for the virtual knot $K=6.87262$ with $\vb(K)=\wb(K)=3$.}
\label{6-87262}
\end{figure}

\section{Bridge numbers for connected sums and mirror images} \label{section-3}

If $K_1$ and $K_2$ are classical knots, then under connected sum the bridge number satisfies 
\begin{equation} \label{connsum}
\br(K_1 \# K_2) = \br(K_1)+\br(K_2)-1.
\end{equation}
 
Question 6.5 of \cite{HKK}  asks whether there is a similar formula for virtual knots, i.e. given virtual knots $K_1$ and $K_2$, is it true  that $\vb(K_1\# K_2)=\vb(K_1)+\vb(K_2)-1$? 
We will show by example that this equation does not always hold. In general, given two Gauss diagrams $D_1$ and $D_2$ one can deduce that   
\begin{equation}\label{connect}
\vbr(D_1\# D_2) ~\leq~\vbr(D_1)+\vbr(D_2),
\end{equation}
using the bridge reduction lemma of Hirasawa et.~al., see \cite[Lemma 4.1]{HKK}. Recall that the connected sum in virtual knot theory depends on where the diagrams are connected, and there are numerous different cases to consider. Connecting between undercrossings on $D_1$ and $D_2$, one can see that $\vbr(D_1\# D_2) ~\leq~\vbr(D_1)+\vbr(D_2)$, and similarly for overcrossings. Connecting between overcrossings on one diagram and between undercrossings on the other formally adds a bridge, but since that overbridge does not interact with the underbridge from the other diagram, one can reduce the bridge number using the bridge reduction lemma. The remaining possibilities are if the points of connection are situated between an undercrossing and an overcrossing. If similar bridges are connected to each other, $\vbr(D_1\# D_2) ~\leq~\vbr(D_1)+\vbr(D_2)-1$, otherwise, the bridge reduction lemma can be applied twice to get the same bound.
 
Consider the virtual knot and Gauss diagram depicted in Figure \ref{Kishino-like2}. Clearly the diagram has bridge number 2, and we claim that the associated knot group $G_K$ is nontrivial and has meridional rank two. To see this, label the arcs of the Gauss diagram $a,b,c,d$ starting at the top. Then
\begin{eqnarray*}
G_K &\cong& \langle a,b,c,d \mid b=c a c^{-1}, c=a^{-1} b a, d=a c a^{-1}, a=c^{-1} d c \rangle \\
&\cong& \langle a,b \mid a^{-1} b a b^{-1}ab^{-1} \rangle.
\end{eqnarray*}
Using Fox derivatives, it follows that the first elementary Alexander ideal of $G_K$ is $\cE_1 =(1-2t)$, which is nontrivial and principal. Applying Proposition \ref{group-mr}, it follows that $\mr(G_K)>1$, and from this we conclude that  $\vb(K)=2.$

On the other hand, notice that $K=U \# U$ is the connected sum of two trivial knots, each having bridge number 1. Thus 
$\vb(K)=\vb(U\# U)=\vb(U)+\vb(U)$ in this example, showing that the formula \eqref{connsum} does not extend virtual knots. This gives a negative answer to Question 6.5 of \cite{HKK}.  

Arguing just as above, one can show in general that the welded bridge number satisfies
$$\wb(D_1\# D_2) ~\leq~\vbr(D_1)+\vbr(D_2),$$
and the  knot in Figure \ref{Kishino-like2} gives an example with $\wb(K_1\# K_2) = \wb(K_1)+\wb(K_2)$. Thus, the equation \eqref{connsum} does not generally hold for welded knots either.

\begin{figure}[ht]
\centering
\includegraphics[scale=1.80]{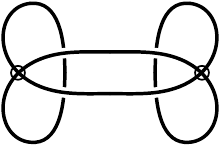} \qquad \qquad \includegraphics[scale=0.80]{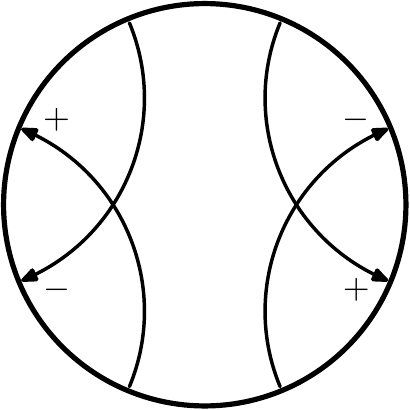}
\caption{A composite virtual knot with $\vb(K)=2$ and its Gauss diagram.}
\label{Kishino-like2}
\end{figure}

Now consider the virtual knot $K_n$ with Gauss diagram $D_n$ given in Figure \ref{Kishino-n}. Note that $\vbr(D_n) = n$, and in case $n=2$ it is just the diagram for the virtual knot in Figure \ref{Kishino-like2}. 
Labeling the over crossings $a_1, \ldots, a_n,$ one can show that
\begin{eqnarray*}
G_{K_n} &\cong& \langle a_1, \ldots, a_n \mid a_i a_{i+1}^{-1}a_i a_{i+1} a_i^{-1}a_{i+1}^{-1} \quad \text{for $1\leq i \leq n$} \rangle,
\end{eqnarray*}
where the subscripts on $a_i$ are taken modulo $n$.
Using Fox differentiation, we compute the Alexander matrix
$$A=\begin{bmatrix} 2-t & t-2 & 0 &\cdots & 0\\
0 & 2-t & t-2 &  &\vdots\\
\vdots& & \ddots & \ddots & 0\\
0&&&2-t& t-2 \\
t-2 & 0 & \cdots &0& 2-t
\end{bmatrix},$$
and using this it follows that $K_n$ has $(n-1)$-st elementary Alexander ideal given by $\cE_{n-1} =(1-2t)$. This ideal is evidently principal and nontrivial, and so Proposition \ref{group-mr} implies that $\vb(K_n) = \wb(K_n) = n.$

\begin{figure}[ht]
\centering
\includegraphics[scale=1.10]{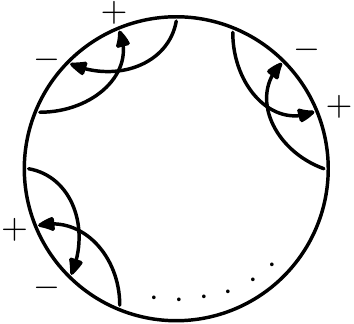}
\caption{The Gauss diagram of a virtual knot $K_n$ with $\vb(K_n)=n$.}
\label{Kishino-n}
\end{figure}

We now consider the operation of taking the vertical mirror image of a virtual knot.
Given a Gauss diagram $D$, let $D^*$ be the diagram obtained by reversing all the arrows and changing each of their signs.
 In terms of a virtual knot diagram for $K$, this corresponds to exchanging over and undercrossing arcs at each classical crossing. This operation induces an involution on virtual knots called vertical mirror symmetry, and we denote by $K^*$ the virtual knot obtained from $K$ by vertical mirror symmetry. 

\begin{thm} \label{thm2}
If $K$ is a virtual knot, then $\vb(K) = \vb(K^*)$. 
\end{thm}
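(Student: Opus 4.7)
The plan is to show the diagram-level identity $\vbr(D) = \vbr(D^*)$ for every Gauss diagram $D$ and then pass to minima over virtual isotopy classes.

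First, for any Gauss diagram with $n \geq 1$ chords, I would view the $2n$ chord endpoints as a cyclic sequence of arrowheads and arrowtails around the circle. By definition, an overbridge is a maximal block of consecutive arrowtails. Call a maximal block of consecutive arrowheads an \emph{underbridge}. In a cyclic sequence in which both labels occur, maximal blocks of tails and maximal blocks of heads must strictly alternate, so there are the same number of each. Thus $\vbr(D)$ coincides with the number of underbridges of $D$. (When $n=0$ the Gauss diagram represents the unknot, which satisfies $K^* = K$ and has bridge number $1$ by convention, so this case is trivial.)

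Next, the operation $D \mapsto D^*$ reverses every arrow and flips every sign, leaving the underlying circle and its marked endpoints fixed; at each endpoint, arrowhead and arrowtail labels are swapped. Hence the overbridges of $D^*$ are exactly the underbridges of $D$, and combining this with the previous step yields
\[
\vbr(D^*) \;=\; \#\{\text{underbridges of } D\} \;=\; \vbr(D).
\]

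Finally, the Polyak generating Reidemeister moves of Figure \ref{RM} are stable under the mirror involution (each move is carried to a move of the same type on the reflected diagram), so $D \mapsto D^*$ descends to a bijection from the set of Gauss diagrams virtually isotopic to $K$ onto those virtually isotopic to $K^*$. Taking the minimum of $\vbr$ across this bijection and invoking the pointwise equality $\vbr(D^*) = \vbr(D)$ gives $\vb(K^*) = \vb(K)$. The only subtle point is the cyclic-alternation observation in the first step, and I do not expect any serious obstacle beyond that.
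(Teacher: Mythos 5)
Your proof is correct and follows essentially the same route as the paper: establish that the mirror operation preserves the bridge number of each Gauss diagram and then pass to the minimum over the mirrored family of diagrams. The head/tail block-alternation argument you give simply fills in the detail behind the paper's assertion that $D$ and $D^*$ have the same number of bridges.
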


\begin{proof}
To see this, note that given any Gauss diagram $D$ with $m$ bridges, then $D^*$ must also have $m$ bridges. Thus there is an isomorphism between the sets of Gauss diagrams representing $K$ and $K^*$, and this isomorphism preserves the bridge number of the underlying diagram. Since the virtual bridge number is defined as a minimum over all Gauss diagrams, this shows that  $\vb(K) = \vb(K^*).$
\end{proof}

Since the knot $K$ depicted in Figure \ref{Kishino-like2} has $\vb(K) = 2,$ 
it follows from Theorem \ref{thm2} that its vertical mirror image $K^*$ must also have
virtual bridge number $\vb(K^*) = 2.$ However, an elementary argument shows that $K^*$ is welded trivial, thus   $$1=\wb(K^*) ~< ~\vb(K^*)=2.$$
Applying the same reasoning to the family of knots $K_n$ in Figure \ref{Kishino-n}, it follows from Theorem \ref{thm2} that $K^*_n$ has
virtual bridge number $\vb(K^*_n) = n.$ Since $K^*_n$ is welded trivial,  $$1=\wb(K^*_n) ~< ~\vb(K^*_n)=n.$$
Thus, the family $K_n$ of knots shows that gap between the welded bridge number and the virtual bridge number can be arbitrarily large.

Notice that the knot groups $G_K$ and $G_{K^*}$ need not be isomorphic, and in fact the knot in Figure \ref{Kishino-like2} is an example where $G_K$ is nonabelian and $G_{K^*}$ is  infinite cyclic. By convention, $G_K$ is usually called the upper group and is denoted $G^+_K$, and $G_{K^*}$ is called the lower group and is denoted $G^-_K$. Both groups are invariant under virtual isotopy and have meridional rank less the virtual bridge number $\vb(K)$.
Thus 
\begin{equation} \label{G-bound}
\max\left\{ \mr(G^+_K), \mr(G^-_{K})\right\} ~\leq ~ \vb(K).
\end{equation}
In Section \ref{PandP}, we use parity to find a virtual knot $K$ for which the inequality \eqref{G-bound} is strict.

\section{The reduced virtual knot group}
In this section, we show how to derive bounds on the virtual bridge number from the reduced virtual knot group  $\RG_K$.
This is a group-valued invariant of virtual knots introduced in \cite{duality}. The group $\RG_K$ is isomorphic to two other the group valued invariants of virtual knots, namely the extended group $\wt{\pi}_K$ of Silver and Williams \cite{SW-Crowell} and the quandle group $\QG_K$ introduced by Manturov in \cite[Definition 3]{Manturov02}. For details on the proof of this result, we refer to \cite[Theorem 3.3]{duality}.  Note that $\QG_K$ coincides with the extended group of Bardakov and Bellingeri, cf. \cite[Definition 3]{BB14}.

Because $\RG_K$ is isomorphic to both of these other groups, the following application could be developed with any one of them.  We prefer to work with $\RG_K$ because it has the advantage of being computable directly from the Gauss diagram. The quandle group $\QG_K$ is most easily computed from a virtual knot diagram of $K$, but that requires information about the virtual crossings.

Given a Gauss diagram $D$, the reduced virtual knot group $\RG_D$ is the finitely presented group given by the following construction. Suppose $D$ has $n$ chords, and consider the generators $a_1, \ldots, a_{2n}$ given by the $n$ arcs of $D$ from one arrowhead or arrowfoot to the next. We order the chords $c_1, \ldots, c_n$ and arcs $a_1,\ldots, a_{2n}$  successively so that the $k$-th chord $c_k$ has overcrossing arcs $a_j, a_{j+1}$ and undercrossing arcs $a_i, a_{i+1}$. Here $i=i(k)$ and $j=j(k)$ are both functions of $k=1,\ldots, n.$ Given a chord $c_k$ as in Figure \ref{GD-VGK-bar-crossing}, we impose the relations $a_{j+1} =v^{\ep_k} a_j v^{-\ep_k}$ and $a_{i+1} = a_j^{\ep_k} v^{-\ep_k} \, a_i \, v^{\ep_k} a_{j}^{-\ep_k}$, where $\ep_k$ is the sign of chord $c_k$.

\begin{figure}[ht]
\centering
\includegraphics[scale=0.90]{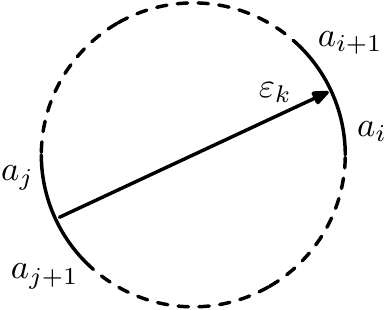}  
\caption{The chord $c_k$ gives Wirtinger relations $a_{j+1} =v^{\ep_k} a_j v^{-\ep_k}$ and $a_{i+1} = a_j^{\ep_k} v^{-\ep_k} \, a_i \, v^{\ep_k} a_{j}^{-\ep_k}$.}
\label{GD-VGK-bar-crossing}
\end{figure}

The group $\RG_D$ is invariant under virtual isotopy, and we refer to \cite{duality} for the proof. The resulting invariant is called the reduced virtual knot group of $K$ and will be denoted $\RG_K$.

Notice that for the unknot $U$, $\RG_U = \langle a,v \rangle.$ Therefore, we will say that $K$ has trivial reduced virtual knot group if $\RG_K$ is a free group on two generators.
The reduced virtual knot group $\RG_K$ is similar to the knot group $G_K$, except that it is not invariant under the forbidden move, so it generally carries more information than $G_K$, see \cite{BDGGHN, duality}. For example, there are infinitely many nontrivial virtual knots $K$ with virtual bridge number one, and each one is welded trivial and so has trivial knot group. Most of these knots have nontrivial $\RG_K$. 
 
We now discuss the notion of meridional rank for the reduced virtual knot group $\RG_K$. Given a presentation
\begin{equation}\label{quandle}
\RG_K = \langle a_1, \ldots , a_n, v \mid  r_1, \ldots, r_n  \rangle,
\end{equation} 
we say $a_i$ is a meridional generator if it comes from an arc of the knot diagram of $K$. We say the given presentation has meridional rank $n$, and we define the meridional rank of $\RG_K$ to be the minimum over all Tietze equivalent presentations of $\RG_K$.

Suppose now that $K$ is an $m$-bridge virtual knot. Arguing as before, $\RG_K$ admits a presentation with $m$ meridional generators. This shows that for any virtual knot $K$, the meridional rank and virtual bridge number satisfy 
\begin{equation} \label{quandlegp}
\mr(\RG_{K}) \leq \vb(K).
\end{equation}

If $K$ is a classical knot, or more generally almost classical, then Theorem 5.3 of \cite{duality} implies that  the reduced virtual knot group splits as a  a free product $\RG_K \cong  G_K * \langle  v \rangle  $ of the knot group with an infinite cyclic group.  For a general virtual knot, 
the quotient $\RG_K\! /\langle \! \langle v \rangle \! \rangle$ of the reduced virtual knot group is isomorphic to the knot group $G_K$. It follows from these observations that $\mr(G_K) \leq \mr(\RG_K)$. 
 
As with the knot group, both $\RG_K$ and $\RG_{K^*}$ are invariant under virtual isotopy and have meridional rank less the virtual bridge number $\vb(K)$.
Thus 
\begin{equation} \label{RG-bound}
\max\left\{ \mr(\RG_K), \mr(\RG_{K^*})\right\} ~\leq ~ \vb(K).
\end{equation}
Notice that $\max\left\{ \mr(G_K), \mr(G_{K^*})\right\} \leq \max\left\{ \mr(\RG_K), \mr(\RG_{K^*})\right\}$, so the reduced virtual knot groups give a better lower bound for  the virtual bridge number $\vb(K)$.
It would be interesting to find an example of a virtual knot $K$ where the inequality \eqref{RG-bound} is strict.  

One can use the elementary ideal theory of the reduced virtual knot group $\RG_K$ to deduce information about its meridional rank $\mr(\RG_K).$ This is similar to the discussion for $G_K$, the only real difference is that $\RG_K$ has abelianization   
isomorphic to the free abelian group $\ZZ^2$ of rank two,
and as a result $\RG_K'/ \RG_K''$ becomes a module over the ring $\ZZ[t^{\pm 1}, v^{\pm 1}]$. As before, one can easily determine a presentation matrix for this module using Fox differentiation.
For example, given the group presentation \eqref{quandle},
the associated presentation matrix  is the $(n+1) \times n$ matrix 
$$M = \begin{bmatrix} A & \left(\frac{\partial r^{}}{\partial v_{}} \right)  \end{bmatrix},$$
where
$A$
is the $n \times n$ Alexander matrix given in \eqref{Alex} and
$\left(\frac{\partial r^{}}{\partial v_{}} \right)$
is the column vector with $i$-th entry equal to $ \left. \frac{\partial r_i}{\partial v}\right|_{a_1, \ldots, a_n=t}$.

The $k$-th elementary ideal  is the ideal generated by all $(n-k) \times (n-k)$ minors of $M$, and they are invariants of the underlying module and form a nested  sequence 
\begin{equation} \label{chain2}
 (0)\subset \wbar{\cE}_0 \subset \wbar{\cE}_1 \subset \cdots \subset \wbar{\cE}_n  = \ZZ[t^{\pm 1}, v^{\pm 1}].
 \end{equation}
Note that because the presentation matrix $M$ is no longer square, the $0$-th order ideal $\wbar{\cE}_0$ is not necessarily trivial. Just as before, the length of the chain \eqref{chain2} of ideals is bounded by the meridional rank of $\RG_K$.
Indeed, if $\RG_K$ has meridional rank $k$, then using a presentation of $\RG_K$ with $k$ meridional generators, it follows that $\wbar{\cE}_k = (1)$. 
The following proposition summarizes our discussion.
\begin{prop} \label{quandle-mr}
Let $K$ be a virtual knot and consider the elementary ideal theory associated to the reduced virtual knot group $\RG_K$. If the $k$-th elementary ideal $\cE_k \neq (1)$ is proper, then the reduced virtual knot  group $\RG_K$ has meridional rank $\mr(\RG_K) \geq k+1$ and $K$ has virtual bridge number $\vb(K) \geq k+1$.
\end{prop}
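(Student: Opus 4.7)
The plan is to mirror the argument that established Proposition \ref{group-mr}, with the one structural change being that the module $\RG_K'/\RG_K''$ is now a module over $\ZZ[t^{\pm 1}, v^{\pm 1}]$ and its presentation matrix carries one extra column from Fox differentiation with respect to $v$.

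First I would invoke the standard invariance property of elementary (Fitting) ideals: the ideals $\wbar{\cE}_k$ appearing in \eqref{chain2} depend only on the isomorphism class of $\RG_K'/\RG_K''$ and not on the particular finite presentation used to compute them. This is the exact analogue of the statement used before \eqref{chain} for $G_K$ and follows from the classical Crowell--Fox theory, which says that the ideal of $j\times j$ minors is preserved by elementary row and column operations on the presentation matrix and by the addition or deletion of trivial generators and relators.

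Next, suppose $\mr(\RG_K) = m$. By definition of meridional rank, there is a Tietze-equivalent presentation $\RG_K = \langle b_1, \ldots, b_m, v \mid s_1, \ldots, s_\ell \rangle$ in which $b_1, \ldots, b_m$ are meridional. Fox differentiating each $s_i$ with respect to $b_1, \ldots, b_m, v$ and specializing each $b_i$ to $t$ produces a presentation matrix $M'$ for $\RG_K'/\RG_K''$ with exactly $m+1$ columns. Consequently, the $m$-th elementary ideal $\wbar{\cE}_m$, being generated by the $0\times 0$ minors of $M'$, equals the unit ideal $(1)$ by the standard convention. By invariance, the same conclusion holds for $\wbar{\cE}_m$ computed from any other presentation, including \eqref{quandle}.

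Taking the contrapositive, if $\wbar{\cE}_k \neq (1)$ is proper, then $k < m$, that is $\mr(\RG_K) \geq k+1$, and combining with the inequality $\mr(\RG_K) \leq \vb(K)$ from \eqref{quandlegp} yields $\vb(K) \geq k+1$. There is no real obstacle in the proof: the only substantive point is the presentation-independence of the elementary ideals, which is classical, together with the purely combinatorial observation that including $v$ in a presentation with $m$ meridional generators produces a matrix of exactly $m+1$ columns. The rest is bookkeeping parallel to the $G_K$ case.
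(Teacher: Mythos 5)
Your proposal is correct and follows essentially the same route as the paper, which proves the proposition by exactly this discussion: a presentation of $\RG_K$ with $m$ meridional generators plus $v$ yields, via Fox calculus, a presentation matrix whose $m$-th elementary ideal (in the paper's convention, generated by $0\times 0$ minors) is $(1)$, so presentation-invariance of the ideals gives $\mr(\RG_K)\geq k+1$ when $\wbar{\cE}_k\neq(1)$, and \eqref{quandlegp} then gives $\vb(K)\geq k+1$. No substantive difference from the paper's argument.
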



\section{Kosaka's conjecture}

In Kamada's notation for virtual knots, the sequence of over and under crossings is called the primary data of a knot diagram, and the signs of all the crossings constitute the secondary data. In terms of Gauss diagrams, the primary data consists of the (unsigned) chords, and the secondary data consists of their signs. Turaev studies equivalence classes of unsigned Gauss diagrams and calls them pseudo-knots. For instance, for a classical knot diagram, there are only two realizable signatures on the associated pseudo-knot. The situation is different in the virtual realm, where there are $2^n$, where $n$ is the number of crossings.

Problem 6.3 in \cite{HKK} is the following conjecture.

\begin{conj} \emph{(Kosaka)} Virtual knots have the same virtual bridge number if they have the same primary data.
\end{conj}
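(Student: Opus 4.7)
My plan is to attempt to show that the virtual bridge number depends only on primary data. The trivial starting observation is that for a Gauss diagram $D$, the bridge number $\vbr(D)$ is determined by the positions of the arrowheads on the circle (they separate the overbridges), so $\vbr(D)$ does not see the signs of the chords. Consequently, if $D_1$ and $D_2$ have the same primary data, then $\vbr(D_1) = \vbr(D_2)$, and writing $K_i = [D_i]$, we immediately get $\vb(K_1) \leq \vbr(D_2)$ and $\vb(K_2) \leq \vbr(D_1)$. The real content of the conjecture is the reverse: if $D_2'$ is any bridge-minimizing representative of $K_2$, one must produce a representative $D_1'$ of $K_1$ with $\vbr(D_1') \leq \vbr(D_2')$.

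I would try two complementary strategies. The first is a direct transport argument: reduce to the case where $D_1$ and $D_2$ differ by flipping the sign of a single chord $c$, and given a Reidemeister sequence $D_2 = E_0 \to \cdots \to E_N = D_2'$ realizing $\vb(K_2)$, attempt to build a parallel sequence $D_1 = F_0 \to \cdots \to F_N$ in which $c$ is tracked and carries the opposite sign throughout. Steps not involving $c$ go through verbatim, and $\Om 1$ steps involving $c$ cause no real trouble since both $\Om 1a$ and $\Om 1b$ are available and can be swapped to match the required sign. The second, more ambitious strategy, if local transport fails, is to find a lower bound for $\vb(K)$ that depends only on primary data and matches $\vbr(D)$; combined with the easy upper bound, this would close the conjecture.

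The main obstacle is the sign-sensitivity of $\Om 2a$ and $\Om 3a$. An $\Om 2$ pair must carry opposite signs, so if $c$ is created as one member of such a pair in the $E$-sequence, flipping its sign alone destroys the $\Om 2$ configuration in the $F$-sequence; the corresponding $\Om 3$ issue is more intricate because the legality of the move depends on the sign pattern of all three participating chords. There is no purely local remedy, and a global rerouting argument seems to require genuinely new input. On the invariant side, the lower bounds developed earlier in the paper --- via the meridional rank of $G_K$ (Proposition \ref{group-mr}), via $\RG_K$ (Proposition \ref{quandle-mr}), and via the parity projections underlying Theorem \ref{manturov} --- all use crossing signs essentially, either through signed Wirtinger exponents in the presentation or through which crossings survive the projection. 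I expect the crux of the conjecture to be the construction of a sign-free refinement of one of these tools, perhaps by combining Manturov's parity (which is itself determined by primary data) with a version of the Alexander or $\RG_K$ ideal theory whose elementary ideals depend only on the unsigned chord combinatorics.
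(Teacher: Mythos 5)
Your strategy is aimed at proving the conjecture, but at the level of virtual knots the conjecture is false, and the paper's treatment of this statement is a disproof, not a proof. Your ``trivial starting observation'' is exactly the weak (Gauss-diagram-level) form, which is indeed true and is all that survives: changing chord signs does not change $\vbr(D)$. But the strong form you then try to establish --- that virtual knots sharing primary data have equal $\vb$ --- fails. The counterexample in the paper is $K=4.103$: its knot group reduces to $\langle a,b \mid a^2 b^{-1}aba^{-2}b^{-1}\rangle$ with first elementary ideal $\cE_1=(1+t-t^2)$, so by Proposition \ref{group-mr} one gets $\mr(G_K)=2$ and hence $\vb(K)\geq 2$ (with equality since a two-bridge diagram exists). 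Flipping the sign of a single chord, however, produces a Gauss diagram (Figure \ref{4-103}, right) in which two Reidemeister 2 moves cancel all the chords, so the resulting virtual knot $K'$ is trivial and $\vb(K')=1$. Thus two knots with identical primary data have different virtual bridge numbers, and no ``sign-free refinement'' of the lower bounds can exist, since any lower bound depending only on primary data would have to be $\leq 1$ for this primary data.

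Your own analysis of the obstruction already contains the reason your transport argument cannot be repaired: an $\Om 2$ pair must carry opposite signs, so a sign flip can \emph{create} a cancelling $\Om 2$ configuration in one diagram that is absent in the other, and this is precisely what happens for $4.103$. The correct conclusion to draw from that obstacle is not that new input is needed to push the proof through, but that the sign data genuinely changes the virtual isotopy class and its bridge number; the conjecture should only be asserted (and is trivially verified) at the level of individual Gauss diagrams.
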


There are two essential ways to interpret this conjecture, a weak form and a stronger form. We shall see that the weak form is true and the strong form is false.

The first interpretation asks whether Kosaka's conjecture is true on the level of Gauss diagrams. We call this the weak Kosaka conjecture, and it is true since changing the sign of a chord has no effect on the bridge number of a Gauss diagram $D$. 

\begin{figure}[h]
\centering
\includegraphics[scale=0.80]{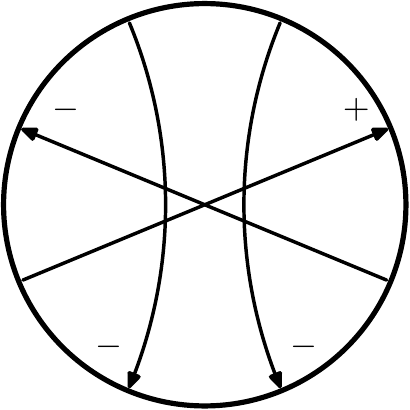} \qquad \qquad \includegraphics[scale=0.80]{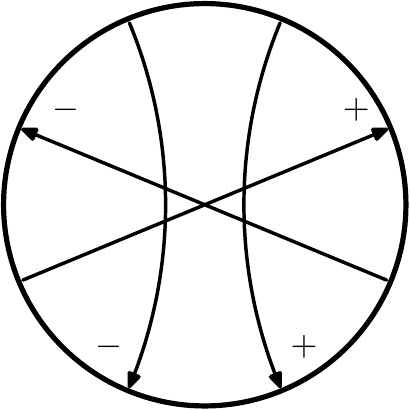}
\caption{On the left is the Gauss diagram of $K=4.103$, which has $\vb(K)=2$. On the right is a related Gauss diagram that is virtually trivial.}
\label{4-103}
\end{figure}

The second interpretation asks whether Kosaka's conjecture is true on the level of virtual knots, and we shall show this is false by presenting a counterexample. Let $K$ be the virtual knot $4.103$ from \cite{Green}. 

First, we claim that is $G_K$ has meridional rank two. To see this, labelling the arcs of the Gauss diagram $a,b,c,d$ starting at the top, it follows that
\begin{eqnarray*}
G_K &\cong& \langle a,b,c,d \mid b=d^{-1} a d, c=a^{-1} b a, d=a^{-1} c a, a=b d b^{=1} \rangle \\
&\cong& \langle a,b \mid a^2 b^{-1}aba^{-2}b^{-1} \rangle.
\end{eqnarray*}
Using Fox derivatives, it follows that the first elementary Alexander ideal of $G_K$ is $\cE_1 =(1+t-t^2)$, which is nontrivial and principal. Using Proposition \ref{group-mr}, it follows that $\mr(G_K)=2$ and thus $\vb(K) \geq \mr(G_K) =2.$

In fact, in \cite{HKK} the authors show that $K$ admits a two-bridge diagram, and so $\vb(K)=2$. On the other hand, changing the sign of one of its chords  gives a diagram that is virtually trivial. To see this, apply two Reidemeister 2 moves to the Gauss diagram depicted on the right of Figure \ref{4-103}. If $K'$ denotes the associated virtual knot, since $K'$ is trivial, we have $\vb(K') = 1,$ and this shows that Kosaka's conjecture is not true on the level of virtual knots.   

\section{Parity, projection, and virtual bridge number} \label{PandP}
In this section, we introduce Manturov's concept of parity for virtual knots and use Gaussian parity to provide an example of a virtual knot $K$ with $\vb(K)>1$ and trivial upper and lower knot groups, answering Question 6.2 of \cite{HKK}.

A parity is a function $f$ that assigns to each crossing $c$ in a virtual knot diagram $D$ an element of $\ZZ/2$ satisfying the following axioms: 

\begin{enumerate}[(i)]
\item The sum of the parity of all crossings in a Reidemeister 1, 2, or 3 move equals $0 \mod 2$. 
\item The parity of each crossing in a Reidemeister 3 move is preserved. 
\end{enumerate}

The crossing $c$ is called \emph{even} if $f(c)=0$ and \emph{odd} if $f(c)=1.$ Given a parity function $f$, one can define a projection $P_f$, which consists of turning each odd crossing of $D$ into a virtual crossing. On the level of Gauss diagrams, this corresponds to erasing the odd chords. The next result contains the key observation that relates the parity axioms with the Reidemeister moves, and this was originally proved by Manturov in \cite{Manturov}.
\begin{lem}
\label{lemma-Manturov}
If two diagrams $D_1$ and $D_2$ are virtually isotopic, then so are their projections $P_f(D_1)$ and $P_f(D_2)$. Consequently, $P_f$ is well-defined as a map of virtual knots $K$.
\end{lem}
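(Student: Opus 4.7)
The plan is to reduce to a move-by-move verification: since virtual isotopy is generated by the minimal set $\Omega 1a, \Omega 1b, \Omega 2a, \Omega 3a$ of Figure \ref{RM}, it suffices to show that whenever $D_1$ and $D_2$ differ by a single such move, their projections $P_f(D_1)$ and $P_f(D_2)$ are related by a (possibly empty) sequence of virtual Reidemeister moves. Iterating over the sequence of moves connecting any two virtually isotopic diagrams then yields the first statement, and the second (well-definedness on virtual knots) is an immediate formal consequence.

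For the $\Omega 1$ moves, the single chord $c$ involved must satisfy $f(c)\equiv 0 \pmod 2$ by axiom (i), so $c$ is not erased by $P_f$; the same $\Omega 1$ move then relates the two projections. For $\Omega 2a$, the two chords $c, c'$ appearing (or disappearing) satisfy $f(c)+f(c')\equiv 0 \pmod 2$ by axiom (i). If both are even, both survive and an $\Omega 2a$ move relates the projections; if both are odd, both are erased, so the two projections coincide as Gauss diagrams (up to a trivial detour reading of the leftover arcs).

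The main case, and the main obstacle, is $\Omega 3a$. Here there are three chords $c_1, c_2, c_3$ on each side of the move, paired across the move by axiom (ii) so that corresponding chords have equal parities; axiom (i) forces $f(c_1)+f(c_2)+f(c_3)\equiv 0 \pmod 2$, so either all three are even or exactly one is. When all three are even, every chord persists on both sides and a single $\Omega 3a$ move relates $P_f(D_1)$ and $P_f(D_2)$. When exactly one chord is even and the other two are odd, the two odd chords are erased on both sides of the move; what needs to be checked is that the surviving chord has the same endpoint configuration on the circle before and after the move. This is the bookkeeping step: one must inspect Figure \ref{RM} and verify, case by case according to which chord is even, that in every subcase the projected diagrams either agree outright as Gauss diagrams or differ by $\Omega 2a$ cancellations of virtual-only arcs. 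Once this finite check is completed (three subcases for which chord is even, and symmetric variants depending on orientations and signs already encoded in axioms (i)--(ii)), the lemma follows.
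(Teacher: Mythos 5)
Your overall strategy is the same as the paper's: reduce to a single Reidemeister move, use axiom (i) to pin down the parity pattern in each move (R1 crossing even; R2 crossings both even or both odd; R3 with zero or two odd crossings, axiom (ii) guaranteeing the pattern matches on both sides), and observe that the even-only cases are handled by the same move after projection. The R1 and R2 cases are handled exactly as in the paper. The only place you diverge is the decisive case, R3 with two odd chords: you correctly identify that what must be shown is that the surviving even chord sits the same way relative to the rest of the diagram after projection, but you then leave this as an unexecuted ``bookkeeping step'' (``once this finite check is completed \dots the lemma follows''). Since this case is the entire content of the lemma beyond the parity counting, you should actually close it. It closes in one line at the Gauss-diagram level: the $\Omega 3a$ move only transposes the two chord endpoints lying on each of the three short arcs involved, and in the two-odd subcase the partner of each endpoint of the surviving chord on its arc belongs to an erased chord, so after deleting the two odd chords the projected Gauss diagrams are literally identical --- no further moves are needed. (The paper phrases the same fact in virtual-diagram language: turning the two odd crossings into virtual crossings makes the two sides of the move differ by the mixed, i.e.\ semi-virtual detour, move.)

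One small inaccuracy: your fallback that the projections might ``differ by $\Omega 2a$ cancellations of virtual-only arcs'' is not the right mechanism. $\Omega 2a$ is a classical move and would reintroduce chords; at the Gauss-diagram level virtual crossings are not recorded at all, so in the odd subcases of R2 and R3 the projected diagrams simply coincide, while in virtual-diagram language the relevant moves are the virtual R2 move and the mixed move, not classical ones. With the R3 check actually carried out (or replaced by the mixed-move observation), your argument matches the paper's proof.
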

 
\begin{proof}
It is enough to show this in the case where $D_1$ and $D_2$ are related by a single Reidemeister move. In the case of a  Reidemeister 1 move, the parity axioms imply the crossing must be even so it is left unchanged under $P_f.$ For a Reidemeister 2 move, either both crossings are even, and nothing changes, or they are both odd, and a virtual  Reidemeister 2 move shows that $P_f(D_1)$ is virtually isotopic to $P_f(D_2).$ For a Reidemeister 3 move, either all three crossings are even and everything follows as before, or two of them are odd, in which case the mixed move can be applied to show $P_f(D_1)$ is virtually isotopic to $P_f(D_2).$ 
\end{proof}

Parities and their projections were applied by Manturov to show that classical knots obtain their minimal crossing number  on classical diagrams \cite{Manturov}, and Chernov observed that the same approach shows that classical knots obtain their minimal bridge number on classical diagrams \cite{Chernov}, see also Theorem \ref{manturov}. This result is a consequence of the following theorem. 

\begin{thm}
\label{proj}
For any parity function $f$, if $K$ is a virtual knot and $K' = P_f(K)$ is its projection, then $\vb (K')\le \vb (K)$. 
\end{thm}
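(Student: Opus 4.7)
The plan is to start from a Gauss diagram $D$ for $K$ realizing the minimum $\vbr(D) = \vb(K)$, and to argue that its projection $D' := P_f(D)$ is a Gauss diagram for $K'$ whose overbridge count is no larger. Since Lemma~\ref{lemma-Manturov} guarantees that $P_f$ descends to a well-defined map of virtual knots, $D'$ represents $K'$, and the inequality $\vbr(D') \leq \vbr(D)$ would immediately give $\vb(K') \leq \vbr(D') \leq \vbr(D) = \vb(K)$.

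The whole theorem therefore reduces to the purely combinatorial claim that erasing a set of chords cannot increase the bridge number; more precisely, I would prove that for any Gauss diagram $D$ and any subset $S$ of its chords, the Gauss diagram $D^\flat$ obtained from $D$ by erasing $S$ satisfies $\vbr(D^\flat) \leq \vbr(D)$. The strategy is to label the arrowheads of $D$ cyclically as $h_1, \ldots, h_n$, so that the circle is partitioned into arcs $\alpha_1, \ldots, \alpha_n$, and to observe that $\vbr(D)$ is exactly the number of $\alpha_i$ containing at least one arrowtail. Because the arrowheads of $D^\flat$ form a subset of $\{h_1,\ldots,h_n\}$, each arc of $D^\flat$ is a union of consecutive $\alpha_i$'s, and $\vbr(D^\flat)$ counts those unions containing at least one surviving arrowtail.

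From here, I would construct an injection from the $D^\flat$-overbridges to the $D$-overbridges by sending each $D^\flat$-overbridge $\beta$ to the smallest-index $\alpha_i \subseteq \beta$ that still contains an arrowtail after the erasure; such an $\alpha_i$ is automatically a $D$-overbridge, and distinct $D^\flat$-overbridges correspond to disjoint cyclic blocks of $\alpha_i$'s, so the assignment is well-defined and injective.

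I don't anticipate a serious obstacle. The only mild nuisance is the degenerate case of Gauss diagrams with no chords, where the convention $\vbr=1$ takes over; this case is straightforward to verify separately. Conceptually, the proof rests on the transparent observation that erasing a chord removes one arrowhead, which can only merge two adjacent arcs and weakly decrease the overbridge count, together with one arrowtail, which can only demote an $\alpha_i$ from being an overbridge. Both operations act monotonically on the overbridge count, and iterating over all erased chords yields the claim.
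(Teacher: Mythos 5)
Your proposal is correct and follows the paper's proof exactly: take a Gauss diagram $D$ realizing $\vb(K)$, use Lemma \ref{lemma-Manturov} to see that $P_f(D)$ represents $K'$, and observe that erasing chords cannot increase $\vbr$. The only difference is that you spell out the combinatorial verification of $\vbr(P_f(D)) \leq \vbr(D)$ (via the injection on overbridges), which the paper simply asserts as obvious.
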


\begin{proof}
Suppose $D$ is a virtual knot diagram for $K$ with minimal bridge number, and let $D'$ be the virtual knot diagram obtained from $D$ by replacing each odd crossing with a virtual crossing. Thus $D'$ represents the virtual knot $K'= P_f(K)$. Since $D'$ is obtained from $D$ by deleting crossings, it follows that $\vbr(D')\leq \vbr(D)$. Thus $\vb(K') \leq \vbr(D')\leq \vbr(D) = \vb(K)$. 
\end{proof}

The Gaussian parity $f$ is defined as follows. For a chord $c$ in a Gauss diagram $D$, let $f(c)$ be the number of other chords of $D$ that link $c$ modulo two. To be concrete, $f(c)$ is obtained by counting, mod two, the arrowheads and arrowtails along the arc of $D$ connecting the arrowtail of $c$ to its arrowhead. 

We apply Gaussian parity to the knot $K$ depicted on the left of Figure \ref{GD-ex}. This diagram has four odd chords, and removing them gives the diagram on the right of Figure \ref{GD-ex} representing the virtual knot $K'=3.7.$ It is not difficult to see that $\vb(K')=2,$ and Theorem \ref{proj} implies that $\vb(K) \geq 2.$ Furthermore, one can show that $K$ is welded trivial, and that $K^*$ is also welded trivial.  It follows that $K$ is a knot with trivial upper and lower knot groups. 
Thus, Proposition 3.4 of \cite{HKK} implies that $K$ has trivial upper and lower quandles, and this answers Question 6.2 of \cite{HKK}.

\begin{figure}[ht]
\centering
\includegraphics[scale=0.90]{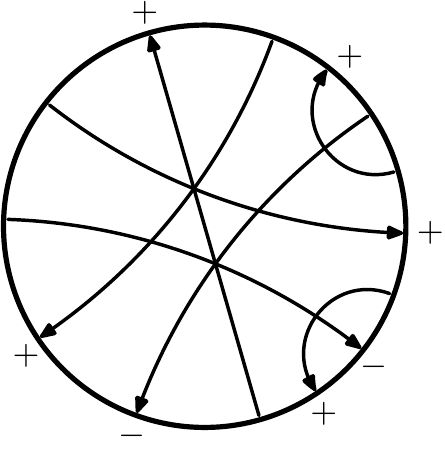}  \qquad \qquad
\includegraphics[scale=0.90]{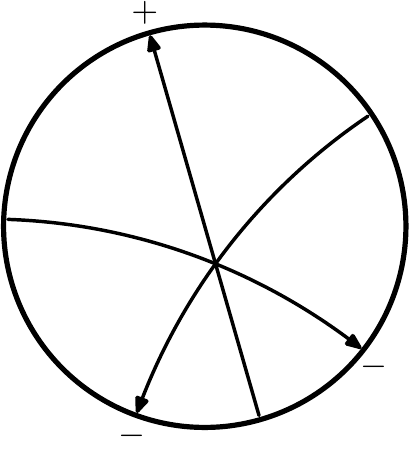} 
\caption{The Gauss diagram of a virtual knot $K$ on left and its projection $K'=P_f(K)$ on right.}
\label{GD-ex}
\end{figure}

\medskip

\noindent
{\it Acknowledgements.} We would like to thank Lindsay White for her help in computing the group invariants of several virtual knots. We would also like to thank Eric Harper and Andrew Nicas for their valuable input on this paper. H. Boden was supported by a Discovery Grant from the Natural Sciences and Engineering Research Council of Canada, and
A. Gaudreau was supported by a Postgraduate Scholarship from  the Natural Sciences and Engineering Research Council of Canada.

\bigskip

\end{document}